\author{Luzia Knoedler$^{1*}$, Oswin So$^{2*}$, Ji Yin$^{3}$, Mitchell Black$^{4}$,\\Zachary Serlin$^{4}$, Panagiotis Tsiotras$^{3}$, Javier Alonso-Mora$^{1}$, and Chuchu Fan$^{2}$% <-this % stops a space
\thanks{Manuscript received: March, 11, 2025; Revised June, 5, 2025; Accepted July, 14, 2025.}%Use only for final RAL version
\thanks{\textcopyright\, 2025 IEEE.  Personal use of this material is permitted.  Permission from IEEE must be obtained for all other uses, in any current or future media, including reprinting/republishing this material for advertising or promotional purposes, creating new collective works, for resale or redistribution to servers or lists, or reuse of any copyrighted component of this work in other works.}%
\thanks{This paper was recommended for publication by Editor Clement Gosselin upon evaluation of the Associate Editor and Reviewers’ comments.}%
\thanks{Luzia Knoedler and Javier Alonso-Mora are supported by the Office of Naval Research Global, grant N62909-25-1-2027, project SECURE.}%Use only for final RAL version
% \thanks{DISTRIBUTION STATEMENT A. Approved for public release. Distribution is unlimited.}%
\thanks{$^*$Both authors contributed equally to this work.}% <-this % stops a space
\thanks{$^{1}$Luzia Knoedler and Javier Alonso-Mora are with the Department of Cognitive Robotics, Delft University of Technology, Delft, The Netherlands. {\tt\footnotesize \{l.knoedler, j.alonsomora\}@tudelft.nl}}%
\thanks{$^{2}$Oswin So and Chuchu Fan are with the Department of Aeronautics and Astronautics, Massachusetts Institute of Technology, Cambridge, MA, USA. {\tt\footnotesize \{oswinso, chuchu\}@mit.edu}}%
\thanks{$^{3}$Ji Yin and Panagiotis Tsiotras are with the D. Guggenheim School
of Aerospace Engineering, Georgia Institute of Technology, Atlanta, GA, USA. {\tt\footnotesize \{jyin81, tsiotras\}@gatech.edu}}%
\thanks{$^{4}$Mitchell Black and Zachary Serlin are with the MIT Lincoln Laboratory,  Cambridge, MA, USA. {\tt\footnotesize \{mitchell.black, zachary.serlin\}@ll.mit.edu}}%
\thanks{Digital Object Identifier (DOI): see top of this page.}
}
\title{Safety on the Fly: Constructing Robust Safety Filters\\ via Policy Control Barrier Functions at Runtime}
\begin{document}

% Paper headers
\markboth{IEEE Robotics and Automation Letters. Preprint Version. Accepted July, 2025}
{Knoedler \MakeLowercase{\textit{et al.}}: Safety on the Fly: Constructing Robust Safety Filters via Policy Control Barrier Functions at Runtime} 

\maketitle
% \thispagestyle{empty}
% \pagestyle{empty}

%%%%%%%%%%%%%%%%%%%%%%%%%%%%%%%%%%%%%%%%%%%%%%%%%%%%%%%%%%%%%%%%%%%%%%%%%%%%%%%%
\begin{abstract}
Control Barrier Functions (CBFs) have proven to be an effective tool for performing safe control synthesis for nonlinear systems.
However, guaranteeing safety in the presence of disturbances and input constraints for high relative degree systems is a difficult problem. 
In this work, we propose the Robust Policy CBF (RPCBF), a practical approach for constructing robust CBF approximations online via the estimation of a value function. 
We establish conditions under which the approximation qualifies as a valid CBF and demonstrate the effectiveness of the RPCBF-safety filter in simulation on a variety of high relative degree input-constrained systems.
Finally, we demonstrate the benefits of our method in compensating for model errors on a hardware quadcopter platform by treating the model errors as disturbances.
\noindent \hh{\textbf{Website including code: \href{https://www.oswinso.xyz/rpcbf/}{www.oswinso.xyz/rpcbf/}}}
% Keywords appear just beneath the abstract. Use only for final RAL version. 
\end{abstract}

\begin{IEEEkeywords}
Robot Safety, Optimization and Optimal Control, Collision Avoidance
\end{IEEEkeywords}

%%%%%%%%%%%%%%%%%%%%%%%%%%%%%%%%%%%%%%%%%%%%%%%%%%%%%%%%%%%%%%%%%%%%%%%%%%%%%%%%
\section{Introduction and Related Works}
% Nice intro on why \glspl{cbf} are great.
\IEEEPARstart{I}{n} the realm of autonomous systems, providing safety guarantees is crucial, especially in critical applications such as autonomous driving and healthcare robotics. 
% ~\cite{betz2019autonomous} ~\cite{haidegger2019autonomy}
\glspl*{cbf}~\cite{ames2016control, wieland2007constructive} have proven to be an effective tool to maintain and certify the safety of dynamical systems. In particular, they can be applied as a~\gls{sf} that minimally modifies arbitrary control inputs to ensure safety, making them especially valuable when integrated with learning-based controllers.

Despite their theoretical advantages, significant challenges remain in the practical application and construction of~\glspl{cbf}. First, constructing~\glspl{cbf} is non-trivial, specifically for high relative degree systems with input constraints. % and often requires domain knowledge.
Second, the safety guarantees of ~\gls{cbf}-based controllers depend on having an accurate system model, which is rarely the case for systems in real life. 
This makes the safety guarantees of such controllers sensitive to model uncertainties.
\begin{figure}
    \centering
    \includegraphics[width=0.95\linewidth]{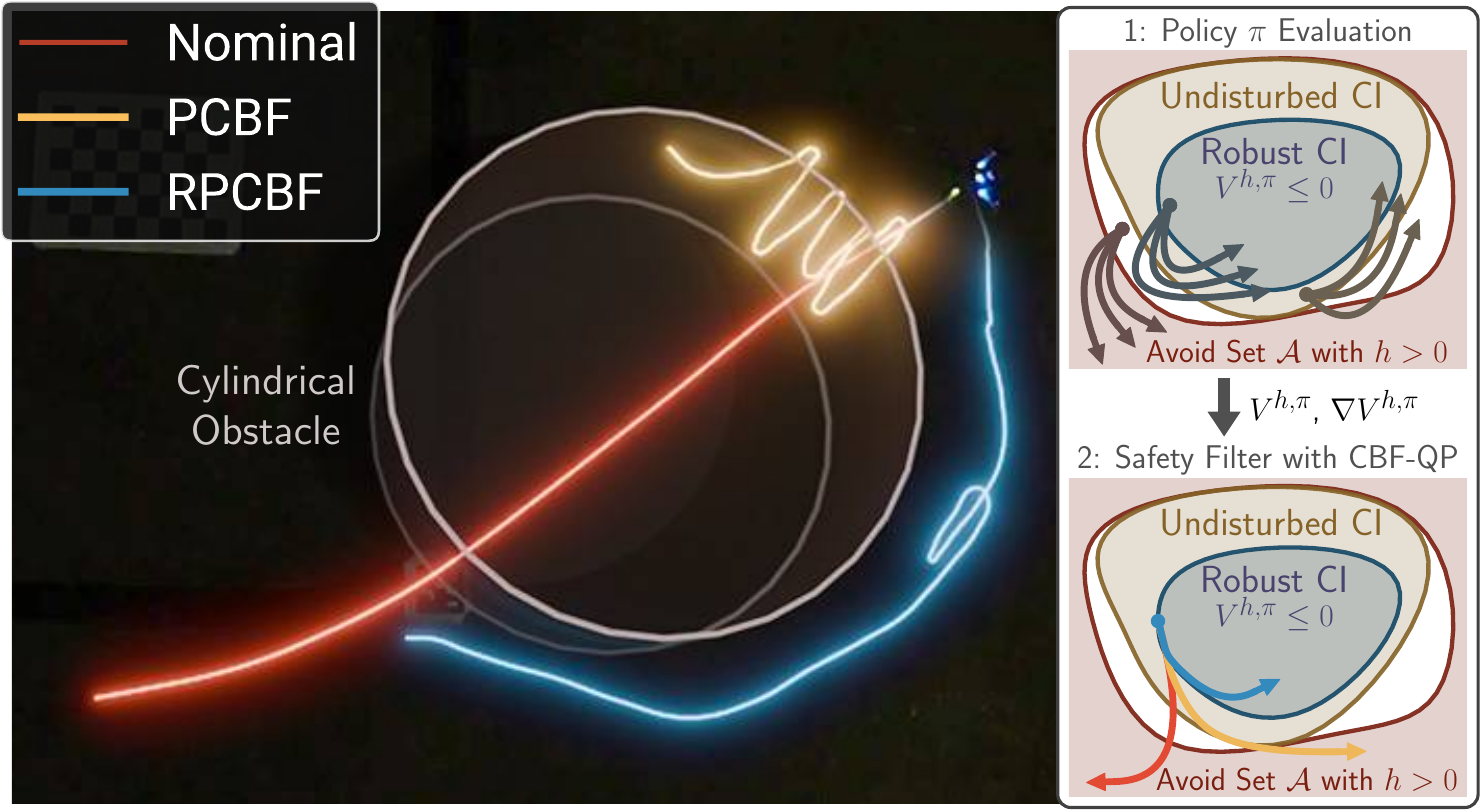}
    \caption{\small We propose the \glsfull{rpcbf}, which approximates the robust value function $\V^{h,\pi}$ of a system under bounded disturbances for policy $\pi$ \hh{at runtime}.
    The zero sublevel set of $\V^{h,\pi}$ is a robust controlled-invariant (CI) set. We apply a \textcolor{ggBlue}{\glsshort{rpcbf}}-\glsfull{sf} to ensure safety for any \textcolor{ggRed}{unsafe nominal policy}, demonstrating superior performance over the non-robust \textcolor{ggYellow}{PCBF}-\gls{sf} on a quadcopter with model errors treated as disturbances.
    }
    \label{fig:teaser}
    \vspace{-2em}
\end{figure}

% neural cbfs
\noindent \textbf{Learning Control Barrier Functions.}
To minimize reliance on extensive domain knowledge, a recent trend is to learn neural~\glspl{cbf} that approximate \glspl{cbf} using \glspl{nn}~\cite{dawson2023safe, lindemann2021learning, so2024train, saveriano2019learning, zhang2025gcbf+, srinivasan2020synthesis, wang2024simultaneous}. Neural \glspl{cbf} have been successfully applied to high-dimensional systems, including multi-agent control scenarios~\cite{zhang2025gcbf+, qin2021learning}, and have been extended to handle parametric uncertainties~\cite{dawson2022safe} and obstacles with unknown dynamics~\cite{yu2023sequential}. 
Although using~\glspl{nn} as~\glspl{cbf} offers universal approximation capabilities, it requires certifying them as valid~\glspl{cbf} to ensure safety guarantees and limits their interpretability.
Furthermore, using a naive approach to learning neural~\glspl{cbf} by minimizing a loss that encourages the~\gls{cbf} conditions can lead to a small or even empty forward-invariant set. Thus,~\cite{so2024train} presents a method to construct~\glspl{cbf} using policy evaluation of \emph{any} policy. They show that the policy value function is a~\gls{cbf} and learn an~\gls{nn} approximation. In this setting, the policy value function represents the maximum-over-time constraint violation, indicating how suitable a state is for a system following a specific policy. However, their approach does not consider uncertainties in the system dynamics.

%robustness
\noindent \textbf{Robust Safety.} Controllers that are robust to disturbances are essential for ensuring the safety of autonomous systems in the real world.
This has been studied before in \textit{robust} \glspl{cbf} \cite{dawson2022safe,jankovic2018robust,cohen2022robust}, which guarantee safety under bounded disturbances. \rev{However, constructing robust \glspl{cbf} is inherently more difficult than constructing standard \glspl{cbf}, especially under input constraints.} Hamilton-Jacobi reachability analysis \cite{mitchell2005time} can be used to compute robust control-invariant sets, which can then be subsequently used for constructing robust \glspl{cbf}  \cite{choi2021robust, wabersich2023data,tonkens2022refining,tonkens2023patching}. However, reachability analysis in itself is challenging, with grid-based \hh{partial \gls{de}} solvers being limited to state dimensions below five \cite{mitchell2008flexible}, while deep learning-based solvers \cite{bansal2021deepreach,hsu2021safety,so2023solving} require subsequent \gls{nn} verification to check for solution accuracy. \rev{Moreover, both learning-based \gls{cbf} approaches and deep learning-based reachability solvers depend on predefined system dynamics and disturbance assumptions, which are difficult to adapt, limiting their flexibility once deployed, as retraining cannot be performed on the system. Thus, \cite{lin2024one} train a value network with the avoidance set and disturbance bounds as inputs, however this increases training data requirements and complicates evaluating how well the learned network represents the true value function.}

\hh{As an alternative to robust safety, other works focus on risk-aware safety, which aims to ensure safety with high probability by modeling disturbances probabilistically and incorporating risk measures \cite{ahmadi2021risk,liu2025risk}. Unlike robust methods, which ensure constraint satisfaction for all disturbances within a bounded set but may be overly conservative, risk-aware approaches typically rely on knowledge of the disturbance’s probability distribution. In this work, we focus on robust safety.}

\react{We propose a practical approach for constructing a \gls{cbf} approximation \hh{at runtime}, which can be derived for any system dynamics and disturbance bounds without requiring (re)training. We establish conditions under which the resulting \gls{cbf} approximation qualifies as a valid \gls{cbf}. Our method constructs \glspl{cbf} by evaluating the value function of \emph{any} policy, which has been shown to be a valid \gls{cbf} in~\cite{so2024train}. By leveraging finite-horizon policy rollouts, we enable a more detailed analysis of safety guarantees than \gls{nn} approximations. We apply this approach to construct approximations of robust \glspl{cbf}.}

\noindent \textbf{Contributions.} We summarize our contributions as follows.
\begin{enumerate}
    \item We propose a method of constructing \react{(robust)} \glspl{cbf} using the \react{(robust)} policy value function and a real-time approximation that can be used \hh{at runtime}. 
    \item We \rev{demonstrate real-time performance} and the benefits of our robust \glspl{cbf} on a hardware quadcopter, where robustness to model errors is key for collision prevention.
\end{enumerate}

\section{Preliminaries}\label{sec:preliminaries}
\subsection{Problem Statement}
We consider a \textit{disturbed} continuous-time, control-affine dynamical system of the form
\begin{equation}
    \dot{\vx}_t = f(\vx_t, \vd_t) + g(\vx_t, \vd_t)\vu_t,\label{eq:system}
\end{equation}
with state $\vx_t \in \cX \subseteq \bR^n$, control input $\vu_t \in \cU \subseteq \bR^m$ and unknown, bounded, smooth disturbance $\vd_\s{min} \leq \vd_t \leq \vd_\s{max}$ with $\vd_\s{min}, \vd_\s{max} \in \bR^d$ (\rev{e.g., estimated from empirical data), where $\vd_t$ can be time-varying.} 
The functions $f$ and $g$ are assumed to be locally Lipschitz continuous. 
Let $\cA \subset \cX$  denote the set of states to be avoided. This paper addresses the following~\gls{sf} synthesis problem:

\begin{problem}[Safety Filter Synthesis]\label{prob:1}
Given a system~\eqref{eq:system} and an avoid set $\cA \subset \cX$, find a control policy $\pi_\s{filt}: \cX \rightarrow \cU$ that keeps the system state outside $\cA$ while staying close to a performant, possibly unsafe nominal policy $\pi_\s{nom}: \cX \rightarrow \cU$:
\begin{equation}
 \begin{aligned}
    \min_{\pi_\s{filt}} \, &\lVert\pi_\s{filt} - \pi_\s{nom}\rVert\\
    \text{s.t.} \ &\hh{\dot{\vx}_t = f(\vx_t, \vd_t) + g(\vx_t, \vd_t)\pi_\s{filt}(\vx_t)}\\
    &\vx_t \notin \cA, \ \forall t\geq 0,\\
\end{aligned}   
\end{equation}
where $\lVert\cdot\rVert$ is some distance metric.
\end{problem}
\noindent We focus on solving \cref{prob:1} using (zeroing) \glspl*{cbf}~\cite{xu2015robustness}.

%%%%%%%%%%%%%%%%%%%%%%%%%%%%%%%%%%%%%%%%%%%%%%%%%%%
\subsection{Safety Filters using Control Barrier Functions}
We begin by providing a standard definition of a \gls*{cbf} in the non-robust case, which we extend to the robust case for~\glspl*{pcbf} in the next section.
Define the \textit{undisturbed} system to be a particular case of the disturbed system~\eqref{eq:system} without disturbances ($\vd=0$), by
\begin{equation}
    \dot{\vx}_t = f(\vx_t, 0) + g(\vx_t, 0) \vu_t.\label{eq:undisturbed_system}
\end{equation}
Let $\cbf: \cX \rightarrow \bR$ be a continuously differentiable function, with $\cC = \{\vx \in \cX \, | \, \cbf(\vx) \leq 0\}$ as its $0$-sublevel set. Let $\alpha: \bR \rightarrow \bR$ be an extended class-$\kappa_\infty$ function\footnote{Extended class-$\kappa_\infty$ is the set of continuous, strictly increasing functions $\alpha : (-\infty,\infty) \rightarrow (-\infty, \infty)$ with $\alpha(0) = 0$.}. Then, $\cbf$ is a \gls*{cbf} for the undisturbed system~\eqref{eq:undisturbed_system} on $\cX$~\cite{ames2016control} if
\begin{subequations}
\label{eq:cbf}
    \begin{flalign}
        \cbf(\vx) > 0, \ \forall \vx \in \cA, &\label{eq:cbf1}\\
        \cbf(\vx) \leq 0 \Rightarrow \inf_{\vu \in \cU}\Lie_f \cbf(\vx) + \Lie_g \cbf(\vx)\vu\leq -\alpha(\cbf(\vx)), &\label{eq:cbf2}
    \end{flalign}
\end{subequations}
with $\Lie_f \cbf \coloneq \nabla \cbf\T f$ and $\Lie_g \cbf \coloneq \nabla \cbf\T g$. It then follows that any control input $\vu \in K_\s{cbf}$ with 
$$
K_\s{cbf}(\vx) = \{\vu \in \cU \,|\, L_f\cbf(\vx) + L_g\cbf(\vx)\vu + \alpha(\cbf(\vx))\leq 0\}
$$
renders $\cC$ forward-invariant~\cite{ames2016control}.
In other words, there exists an $\vu \in \cU$ such that any trajectory starting within $\cC$ remains in $\cC$. 
Asymptotic stability of $\cC$ can be achieved by extending~\eqref{eq:cbf2} to hold for all $\vx \in \cX$~\cite{xu2015robustness}.
Since the right hand side of~\eqref{eq:cbf2} is linear in $\vu$, given a \gls{cbf} $B$,
we can solve \cref{prob:1} for \eqref{eq:undisturbed_system} using the following \gls*{qp}-based controller:
\begin{align*}
    \vu_\s{CBF-QP} = & \arg\min_\s{\vu \in \cU}  \ \lVert\vu - \pi_\s{nom}(\vx)\rVert^2 \tag{CBF-QP}\label{eq:cbf-qp} \\ 
    & \text{s.t.} \ \Lie_f\cbf(\vx) + \Lie_g \cbf(\vx)\vu \leq -\alpha(\cbf(\vx)).
\end{align*}
While \glspl*{cbf} can be applied to guarantee safety for a known undisturbed system, \rev{three} major challenges remain:
\begin{enumerate}
    \item How do we synthesize a valid \gls*{cbf} that satisfies \eqref{eq:cbf2} \rev{for high relative degree systems with input constraints?}
    \item \rev{How do we synthesize a robust \gls*{cbf} that ensures safe control for the disturbed system?}
    \item \rev{How can we efficiently derive a \gls*{cbf} \hh{at runtime} for different system dynamics and disturbance assumptions?}
\end{enumerate}

\section{\react{(Robust)} Policy Control Barrier Functions}\label{sec:method}
To address the above challenges, we leverage the insight from~\cite{so2024train} that~\glspl{cbf} can be constructed by deriving the policy value function through the evaluation of \emph{any} policy. Rather than approximating the policy value function with an~\gls{nn} as in \cite{so2024train}, we propose a \rev{real-time approximation} that avoids \glspl{nn} \rev{and can be derived \hh{at runtime}} through a finite-horizon numerical approximation.
We further extend this approach to the robust case and introduce \glspl{rpcbf} \rev{and subsequently propose a sampling-based approximation that can be derived \hh{at runtime}.}
Next, we revisit the formulation of \glspl*{pcbf} and describe our extensions and approximations.

\subsection{Constructing \acrshortpl{cbf} via Policy Evaluation}
Based on~\cite{so2024train}, we first derive the \gls{pcbf} formulation for the undisturbed system in~\eqref{eq:undisturbed_system}. 
Assume that the avoid set~$\cA$ can be described as the super-level set of a function $h: \cX \rightarrow \bR$
(e.g., the negative distance to the constraint):
\begin{equation}\label{eq:h}
    \cA = \{\vx \in \cX \,|\, h(\vx) >0\}.
\end{equation}
\hh{Note that $h(\vx) > 0$  for states that are already in the failure set, whereas $B(\vx) > 0$ for states from which failure is inevitable in the future under the given dynamics and input constraints. In the absence of input constraints, $h$ and $B$ may coincide.}
We denote by $\vx_t^\pi$ the resulting state at time~$t$ when starting from the initial state $\vx_0$ and following policy~$\pi : \cX \rightarrow \cU$. 
Furthermore, we define the \emph{maximum-over-time} value function for the undisturbed system in~\eqref{eq:undisturbed_system} as
\begin{equation}\label{eq:value_fn}
    \V^{h,\pi}_\infty(\vx_0)  \coloneqq \sup_{t \geq 0} h(\vx_t^\pi).
\end{equation}
As stated in~\cite[Theorem 1]{so2024train}, the \emph{policy value function}~$\V^{h,\pi}_\infty$ is a~\gls*{cbf} for the undisturbed system in~\eqref{eq:undisturbed_system} for any~$\pi$, since $\V^{h,\pi}_\infty$ satisfies the following two inequalities $\forall \vx \in \cX$
\begin{align}\label{eq:v}
    \V^{h,\pi}_\infty(\vx) &\geq h(\vx),\\
    \nabla\V^{h,\pi}_\infty(\vx)^T(f(\vx)+g(\vx)\pi(\vx))&\leq 0,
\end{align}
which imply \eqref{eq:cbf1} and \eqref{eq:cbf2}. For details, we refer to \cite{so2024train}.
The key intuition here is that~$\V^{h,\pi}_\infty$ provides an upper bound on the worst future constraint violation $h$ under the optimal policy since the optimal policy will do no worse than~$\pi$. Thus,~\glspl{cbf} can be constructed via policy evaluation of any policy. \rev{We refer to $\pi$ as the \dpolicy, noting that the nominal policy~$\pi_\s{nom}$ differs from the \dpolicy.}

\subsection{Finite Horizon Approximation of \glspl{pcbf}}\label{sec:finite_horizon}
A key challenge with the policy value function $V^{h,\pi}_\infty$ is that its definition requires an \textit{infinite-horizon}.
While \cite{so2024train} tackles this problem by using an~\gls{nn} to learn $V^{h,\pi}_\infty$ with a loss derived using dynamic programming, we take a different approach and perform a \textit{finite-horizon} approximation that can be computed \textit{without} the use of an~\gls{nn}, enabling a more in-depth analysis of the resulting safety guarantees. Expanding $V^{h,\pi}_\infty$: 
\begin{align}
       V^{h,\pi}_\infty(\vx_0)
       &= \max\Big\{ \sup_{0\leq t < T} h(\vx_t^\pi), V_\infty^{h,\pi}(\vx_T^\pi) \Big\} \label{eq:tmp:e} \\ 
       % &\approx \underbrace{\sup_{0\leq t < T} h(\vx_t^\pi)}_{\coloneqq V^{h,\pi}_T(\vx_0) }, \label{eq:finite_horizon_def}
      &\approx \sup_{0\leq t < T} h(\vx_t^\pi) \rev{\coloneqq V^{h,\pi}_T(\vx_0)}, \label{eq:finite_horizon_def}
\end{align}
where the approximation is made by dropping the $V_\infty^{h,\pi}(\vx_T)$ ``tail''.
The question is then whether the finite-horizon approximation $V_T^{h,\pi}$ is a~\gls{cbf} and can provide safety guarantees.

We can at least answer this in the affirmative when the approximation in \eqref{eq:finite_horizon_def} is an equality, i.e., the maximum occurs in~$[0, T\rev{)}$. 
We state this formally in the following theorem.
\begin{theorem}\label{th:1}
    Suppose that for all $\vx_0 \in \cX$,
    \begin{equation} \label{eq:tmp:d}
        V^{h,\pi}_T(\vx_0) \leq 0 \implies \sup_{0 \leq t < T} h(\vx_t^\pi) > V_\infty^{h,\pi}(\vx_T^\pi).
    \end{equation}
    Then, $V^{h,\pi}_T$ is a~\gls{cbf}.
\end{theorem}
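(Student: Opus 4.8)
The plan is to verify that $V^{h,\pi}_T$ satisfies, for the undisturbed dynamics~\eqref{eq:undisturbed_system}, the two properties that~\cite{so2024train} uses to certify a \gls*{cbf} --- the analogues of~\eqref{eq:v}: (i) $V^{h,\pi}_T(\vx) \ge h(\vx)$ for all $\vx \in \cX$, and (ii) $\nabla V^{h,\pi}_T(\vx)^T\big(f(\vx,0) + g(\vx,0)\pi(\vx)\big) \le 0$ whenever $V^{h,\pi}_T(\vx) \le 0$. Property~(i) implies~\eqref{eq:cbf1} since $\cA = \{h > 0\}$, and~(ii) implies~\eqref{eq:cbf2} because $\pi(\vx) \in \cU$ is a feasible input and $-\alpha(V^{h,\pi}_T(\vx)) \ge 0$ on $\{V^{h,\pi}_T \le 0\}$. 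First I would dispatch~(i), which is immediate from~\eqref{eq:finite_horizon_def} by taking $t = 0$ in the supremum, and which uses neither~\eqref{eq:tmp:d} nor the value of $T$.

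The main work is~(ii), and the idea is to use~\eqref{eq:tmp:d} to show that $V^{h,\pi}_T$ coincides with the infinite-horizon value function $V^{h,\pi}_\infty$ on the sublevel set $\cC_T \coloneqq \{\vx : V^{h,\pi}_T(\vx) \le 0\}$, and then inherit the descent inequality of $V^{h,\pi}_\infty$ from~\cite{so2024train}. Concretely, the decomposition~\eqref{eq:tmp:e} always gives $V^{h,\pi}_\infty(\vx_0) = \max\{V^{h,\pi}_T(\vx_0),\, V^{h,\pi}_\infty(\vx_T^\pi)\}$; if $V^{h,\pi}_T(\vx_0) \le 0$, then~\eqref{eq:tmp:d} says the first argument strictly exceeds the second, so the max is attained there and $V^{h,\pi}_\infty(\vx_0) = V^{h,\pi}_T(\vx_0)$. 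Combined with the pointwise bound $V^{h,\pi}_T \le V^{h,\pi}_\infty$ (also read off~\eqref{eq:tmp:e}), this yields $\cC_T = \{\vx : V^{h,\pi}_\infty(\vx) \le 0\}$ and $V^{h,\pi}_T \equiv V^{h,\pi}_\infty$ on $\cC_T$. Then at every $\vx \in \cC_T$ the two $C^1$ functions share the same gradient, and~(ii) follows from the second line of~\eqref{eq:v} for $V^{h,\pi}_\infty$.

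The hard part will be the gradient matching on the boundary $\partial\cC_T = \{V^{h,\pi}_T = 0\}$, where~\eqref{eq:cbf2} still has to hold: equality of two $C^1$ functions on a set only forces their gradients to agree at points lying in the closure of that set's interior, so I would additionally need something like ``$0$ is a regular value of $V^{h,\pi}_T$'' to make boundary points limits of interior points and then invoke continuity of the gradients. A related regularity caveat --- already implicit for $V^{h,\pi}_\infty$ in~\cite{so2024train} --- is whether the pointwise supremum $\sup_{0 \le t < T} h(\vx_t^\pi)$ is genuinely differentiable. A route avoiding gradients entirely is to prove forward invariance of $\cC_T$ under $\pi$ directly, using that $t \mapsto V^{h,\pi}_\infty(\vx_t^\pi) = \sup_{\tau \ge t} h(\vx_\tau^\pi)$ is nonincreasing together with $\cC_T = \{V^{h,\pi}_\infty \le 0\}$; alternatively, an envelope/Danskin computation works once one observes that~\eqref{eq:tmp:d} and $V^{h,\pi}_\infty(\vx_T^\pi) \ge h(\vx_T^\pi)$ together rule out the maximizing time in $\sup_{0\le t<T} h(\vx_t^\pi)$ being the excluded endpoint $t = T$.
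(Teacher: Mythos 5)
Your proposal is correct and takes essentially the same route as the paper: establish $V^{h,\pi}_T(\vx) \geq h(\vx)$ to get \eqref{eq:cbf1}, then use \eqref{eq:tmp:d} with the decomposition \eqref{eq:tmp:e} to conclude $V^{h,\pi}_T = V^{h,\pi}_\infty$ on $\{V^{h,\pi}_T \leq 0\}$ and inherit \eqref{eq:cbf2} from $V^{h,\pi}_\infty$. The regularity caveat you raise---that equality of the two functions on the sublevel set only forces their gradients to agree at suitable (e.g., interior-limit) points---is a genuine subtlety the paper's proof passes over silently, so your more careful handling only strengthens the same argument.
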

\begin{proof}
    Since $V^{h,\pi}_T(\vx) \geq h(\vx)$ by definition, \eqref{eq:cbf1} is satisfied by $V^{h,\pi}_T$. Moreover, by \eqref{eq:tmp:d}, $V^{h,\pi}_T(\vx_0) = V^{h,\pi}_\infty(\vx_0)$ when $V^{h,\pi}_T(\vx_0) \leq 0$. Hence, since $V^{h,\pi}_\infty$ is a~\gls{cbf}, \eqref{eq:cbf2} holds for $V^{h,\pi}_{\infty}$, and thus also holds for $V^{h,\pi}_{T}$. Thus, $V^{h,\pi}_{T}$ is a~\gls{cbf}.
\end{proof}
\noindent This enables us to prove the following corollary.
\begin{corollary}\label{th:2}
    Suppose there exists a $\tilde{T} < \inf$ such that 
    \begin{equation} \label{eq:tmp:f}
        \argmax_{t \geq 0} h(\vx_t^\pi) < \tilde{T}, \quad \forall \vx_0 \text{ where } V_T^{h,\pi}(\vx_0) \leq 0.
    \end{equation}
    Then, $V^{h,\pi}_T$ is a CBF for $T \geq \tilde{T}$.
\end{corollary}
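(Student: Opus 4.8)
The plan is to deduce \cref{th:2} directly from \cref{th:1} by checking that hypothesis \eqref{eq:tmp:f} implies hypothesis \eqref{eq:tmp:d}. Fix any horizon $T \geq \bar T$ and any $\vx_0 \in \cX$ with $V^{h,\pi}_T(\vx_0) \leq 0$. Since $T \mapsto V^{h,\pi}_T(\vx_0) = \sup_{0\le t<T} h(\vx_t^\pi)$ is nondecreasing, we also have $V^{h,\pi}_{\bar T}(\vx_0) \leq 0$, so \eqref{eq:tmp:f} is applicable at $\vx_0$: every maximizer of the map $t \mapsto h(\vx_t^\pi)$ lies in $[0,\bar T) \subseteq [0,T)$. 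This is the only place the corollary's assumption is used; the rest is bookkeeping with suprema and the flow property of the closed-loop ODE.

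Concretely, I would first note that because a maximizer $t^\star$ of $t \mapsto h(\vx_t^\pi)$ lies in $[0,T)$, the ``head'' term captures the global value:
$$
\sup_{0 \le t < T} h(\vx_t^\pi) \;\ge\; h(\vx_{t^\star}^\pi) \;=\; \sup_{t\ge 0} h(\vx_t^\pi) \;=\; V^{h,\pi}_\infty(\vx_0),
$$
and the reverse inequality is trivial, so equality holds. For the ``tail'' term, I would invoke the semigroup property $\vx_t^{\vx_T^\pi,\pi} = \vx_{T+t}^{\vx_0,\pi}$ of solutions of the autonomous closed-loop system to write $V^{h,\pi}_\infty(\vx_T^\pi) = \sup_{t\ge 0} h(\vx_t^{\vx_T^\pi,\pi}) = \sup_{s \ge T} h(\vx_s^\pi)$. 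Since no maximizer of $t \mapsto h(\vx_t^\pi)$ lies in $[T,\infty)$, this tail supremum is strictly smaller than the global maximum, hence strictly smaller than the head term $\sup_{0\le t<T} h(\vx_t^\pi)$. That is precisely \eqref{eq:tmp:d} at $\vx_0$; as $\vx_0$ ranges over all states with $V^{h,\pi}_T \leq 0$, \cref{th:1} applies and $V^{h,\pi}_T$ is a CBF.

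The main obstacle is obtaining the \emph{strict} inequality in the last step rather than just ``$\le$'': one needs that $\sup_{s\ge T} h(\vx_s^\pi)$ does not approach the global maximum, not merely that every attained maximizer is early. Thus the argument really wants \eqref{eq:tmp:f} read as a genuine gap, $\sup_{t\ge \bar T} h(\vx_t^\pi) < \sup_{t\ge 0} h(\vx_t^\pi)$, which is the natural meaning of ``$\argmax_{t\ge0} h(\vx_t^\pi) < \bar T$''; this automatically covers the implicit assumption that a maximizer exists (e.g., under continuity of $h$ along trajectories together with the closed-loop trajectory remaining in a compact set, so the supremum is attained). The monotonicity of $T \mapsto V^{h,\pi}_T$ used above is elementary. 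With strictness pinned down, everything else reduces to the two supremum identities and a single appeal to \cref{th:1}.
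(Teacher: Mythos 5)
Your proposal is correct and takes essentially the same route as the paper, whose entire proof is the observation that \eqref{eq:tmp:f} implies \eqref{eq:tmp:d} for $T \geq \bar{T}$ followed by an appeal to \cref{th:1}. You simply spell out the supremum bookkeeping, the semigroup identity for the tail term, and the strictness/attainment caveat that the paper leaves implicit.
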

\begin{proof}
    \eqref{eq:tmp:f} implies \eqref{eq:tmp:d} for $T \geq \tilde{T}$. Proof  from \cref{th:1}.
\end{proof}

\rev{
The value of $\tilde{T}$ depends on the chosen \dpolicy~$\pi$. Any policy can be selected, but a conservative \gls{ci} set may result. If $\pi$ is chosen as a controller that steers the system towards a safe, steady-state or \gls{ci} set within a finite horizon $\tilde{T}$, \Cref{th:2} holds, thus $V_T^{h,\pi}$ is a valid \gls{cbf}. 
}

\hh{If $V^{h,\pi}_T(\vx_0)=0$, then applying the design policy $\pi$ exactly guarantees that the system will remain safe for at least the time horizon $0\leq t < T$, even if $V^{h,\pi}_T(\vx_0) < V^{h,\pi}_\infty(\vx_0)$.}

\begin{remark}[Connections to Backup Controller / \glspl{cbf}]
    Since the zero sublevel set of $V^{h,\pi}_\infty$ is a \gls{ci} set under $\pi$, \eqref{eq:tmp:e} can also be seen as Backup~\gls{cbf} \cite{singletary2022onboard,chen2021backup} with backup controller $\pi$ \hh{and no known \gls{ci} terminal set.} Unlike this (and other similar approaches \cite{agrawal2024gatekeeper}), our approach replaces the need for a known \gls{ci} set with the requirement of a sufficiently long horizon $T$. \hh{Thus, the design policy $\pi$ can be chosen arbitrarily and is not required to steer the system into a \gls{ci} set. Furthermore, we demonstrate that the naive approximation of $h$ over a time-discretized state trajectory introduces gradient errors. To address this, we present an improved time-discretization using cubic splines in \cref{sec:discrete}.}
\end{remark}
\begin{remark}[Connections to~\gls{mpc}]
    The finite-horizon approximation here is closely related to the use of~\gls{mpc} by practitioners.
    More precisely, although a terminal constraint set is often required to theoretically guarantee recursive feasibility of finite-horizon~\gls{mpc}~\cite{mayne2000constrained,brito2019model}, practitioners often apply~\gls{mpc} without the use of such a terminal constraint set to wide success \cite{kim2019highly,alcala2020autonomous,wang2021variational,so2022maximum}.
    Our decision to drop the $V^{h,\pi}_\infty(\vx_T^\pi)$ term can be viewed as being similar to dropping the terminal constraint set.
    Another similarity is the choice of horizon $T$. Namely, recursive feasibility holds in MPC given a sufficiently large horizon~\cite{boccia2014stability}, similar to \Cref{th:2}.
    \hh{However, the \gls{mpc} horizon length is limited, as it requires solving a potentially nonlinear and non-convex optimization problem online, with computational complexity typically scaling cubically with $T$~\cite{kirches2012efficient}.} \hhh{A key advantage of \gls{pcbf}-\glspl{sf} is that they only solve the simpler~\eqref{eq:cbf-qp}, whose computation time is unaffected by~$T$, see \Cref{sec:compute}.}
\end{remark}
\begin{figure}
    \centering
    \includegraphics[width=0.4\textwidth]{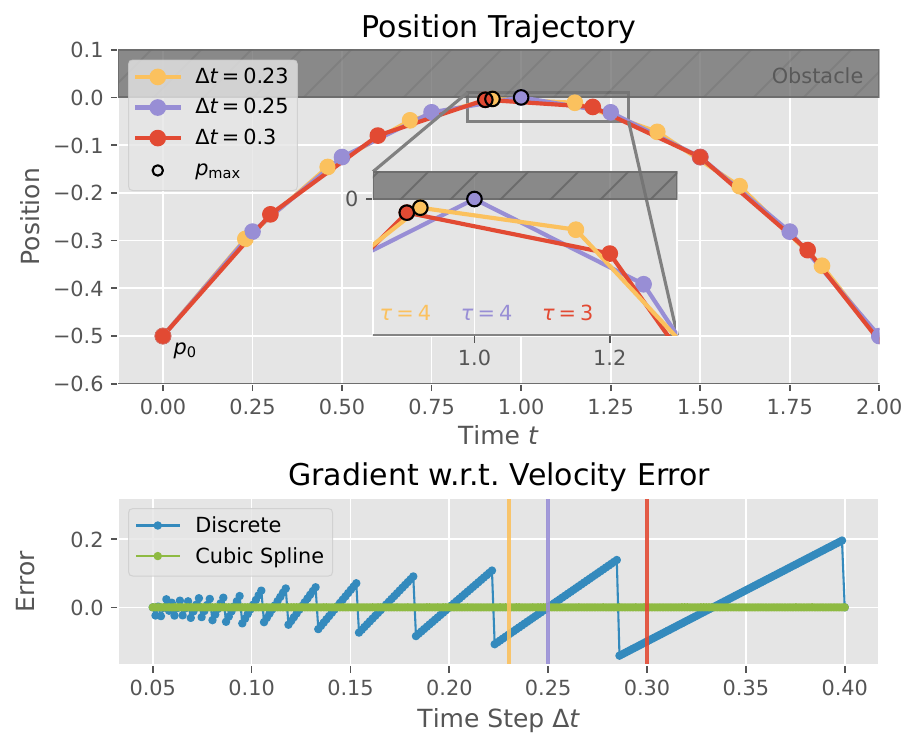}
    \caption{\small\rev{\textbf{Value Function Gradient Error for Discrete-Time Double Integrator.}}
    We highlight the discretized trajectory (top) and corresponding gradient error (bottom) for three different choices of $\Delta t$ (\textcolor{ggYellow}{yellow}, \textcolor{ggPurple}{purple}, \textcolor{ggRed}{red}).
    The gradient of the naive discrete-time approximation has large errors and varies with the choice of $\Delta t$. 
    Taking the maximum of the cubic spline leads to much smaller errors.
    }
    \label{fig:double_integrator_gradient}
    \vskip -5mm
\end{figure}
\begin{remark}[Connections to \gls{psf}~\cite{wabersich2021predictive}]
    \rev{The finite-horizon approximation is closely related to the \gls{psf}, which implicitly represents the safe set via a finite-horizon \gls{mpc} problem with terminal constraints or long horizons for recursive feasibility. 
    Unlike \gls{pcbf}-\glspl{sf}, the \gls{psf} requires solving a potentially nonlinear and nonconvex optimization problem online with complexity scaling cubically in $T$ \cite{kirches2012efficient}.
    \gls{pcbf}-\glspl{sf} only require solving the simpler~\eqref{eq:cbf-qp}.
    However, while the \gls{psf} may find a locally optimal solution, the conservativeness of the \gls{pcbf}-\gls{sf} depends on~$\pi$.}
\end{remark}

\subsection{Time Discretization of Policy Control Barrier Functions}\label{sec:discrete}
Another challenge lies in how to compute the maximum in \eqref{eq:finite_horizon_def}.
The states $\vx_t^\pi$ can be solved numerically using an \hh{ordinary \gls{de}} solver, resulting in a time-discretized state trajectory. 
It is tempting to then consider taking the maximum $h$ over this trajectory, i.e., for time discretization $\Delta t$,
\begin{equation} \label{eq:tmp:b}
    \V^{h,\pi}_T(\vx_0) \approx \max_{0\leq k < \horizon} h(\vx_{k \Delta t}^\pi).
\end{equation}
However, the gap between \eqref{eq:finite_horizon_def} and \eqref{eq:tmp:b} is particularly disastrous when computing the gradient.
We illustrate this in the following example for the~\gls{di}.

\noindent\textbf{Example: Gradient Error on the Double Integrator.}
Consider a~\gls{di} with positive velocity $v_0 > 0$ decelerating with $\pi(\vx) = a = -1$. The dynamics are defined by $\dot{p} = v$, $\dot{v} = a$ with initial state $\vx_0 = [p_0, v_0]$  and constraints $h(\vx) = p \leq 0$.
For the continuous-time case, the gradient can be derived as
\begin{equation}
    \nabla\V^{h,\pi}_\infty(\vx_0) = \frac{\partial p_\s{max}}{\partial \vx_0} = [1, v_0].
\end{equation}
After (exact) time discretization with timestep $\Delta t$, the time-discretized states can be computed as
\begin{subequations}
    \begin{flalign}
    p_k = p_0 +v_0 k\Delta t + 0.5a(k\Delta t) ^2, \\
    v_k = v_0 + ak\Delta t.
    \end{flalign}
\end{subequations}
% $p_k = p_0 +v_0 k\Delta t + 0.5a(k\Delta t) ^2$, $v_k = v_0 + ak\Delta t$.
We now show that the gradient of $V^{h,\pi}_\infty$ depends on $\Delta t$ and denote by $\nabla \V^{h,\pi}_{\infty, \Delta t}$ the resulting gradient. Let $\tau$ be the \textit{integer} time step $k$ at which the maximum position is reached. The maximum position is then given by
\begin{equation}
    V^{h,\pi}_{\infty, \Delta t} = p_\s{max} = p_0 +v_0 \tau\Delta t + 0.5a(\tau\Delta t)^2,
\end{equation}
with $\frac{\partial p_\s{max}}{\partial v_0} = \tau\Delta t$,
which is a function of $\tau$. Although $\tau$ also depends on $v_0$, it is piecewise constant and has zero derivative since it only takes integer values.
Comparing the gradients of $V^{h,\pi}_\infty$ with $V^{h,\pi}_{\infty,\Delta t}$ in \cref{fig:double_integrator_gradient}, we see a large error between the two with discontinuities in the discrete-time gradient in~$\Delta t$.
This is particularly problematic when the gradient is used in a gradient-based optimization algorithm such as \eqref{eq:cbf-qp}.

\noindent\textbf{Improved Time-Discretization using Cubic Splines. } To reduce the error in the time-discretized value function approximation \eqref{eq:tmp:b}, we propose to approximate $h(\vx_t^\pi)$ by fitting a cubic spline to the points $\{h(\vx_{k \Delta t}^\pi)\}_{k=0}^{H-1}$.
The $\max$ over the cubic spline can then be computed in closed-form by solving the roots of a quadratic to yield a better approximation of $\sup_{0 \leq t < T} h(\vx_t^\pi)$ than the naive maximization \eqref{eq:tmp:b}.
Intuitively, this resolves the gradient error due to integer-valued $\tau$ from the previous example because the maximum of the cubic spline can now happen \textit{between} timesteps.
\rev{We can also formally quantify the error in both the cubic spline value and its gradient.
Let $\tilde{h} : [0,\infty) \to \mathbb{R}$ denote the cubic spline approximation of $h$ as a function of time, and let $\tilde{V}^{h,\pi}_{\infty, \Delta t}(\vx_0) \coloneqq \sup_{t\geq0} \tilde{h}(t)$.
Using \cite[Chapter~5]{de1978practical}, we obtain
the error bounds
\begin{subequations} \label{eq:spline_bounds}
\begin{align}
    \norm{V^{h,\pi}_{\infty} - \tilde{V}^{h,\pi}_{\infty, \Delta t}} 
    &\leq \frac{1}{16} \Delta t^4 \max_{t \geq 0} \norm{ \frac{\diff^4}{\diff t } h(\vx_t) }, \\
    % ---
    \norm{\nabla V^{h,\pi}_{\infty} - \nabla \tilde{V}^{h,\pi}_{\infty, \Delta t}} 
    &\leq \frac{1}{24} \Delta t^3 \max_{t \geq 0} \norm{ \frac{\diff^4}{\diff t } h(\vx_t) }.
\end{align}
\end{subequations}
In the previous example of the~\gls{di}, since $h$ is exactly quadratic, applying cubic splines results in zero gradient error (\cref{fig:double_integrator_gradient}).
If $\diff^4/\diff t^4\;  h(\vx_t)$ can be bounded, the bounds \eqref{eq:spline_bounds} can then be used to suitably modify~\eqref{eq:cbf-qp} to guarantee safety.} \hh{A larger $\Delta t$ will therefore result in a more conservative \gls{sf}.}

%%%%%%%%%%%
\begin{figure*}[htbp]
    \centering
    \vspace{1em}
    \includegraphics[width=0.75\linewidth]{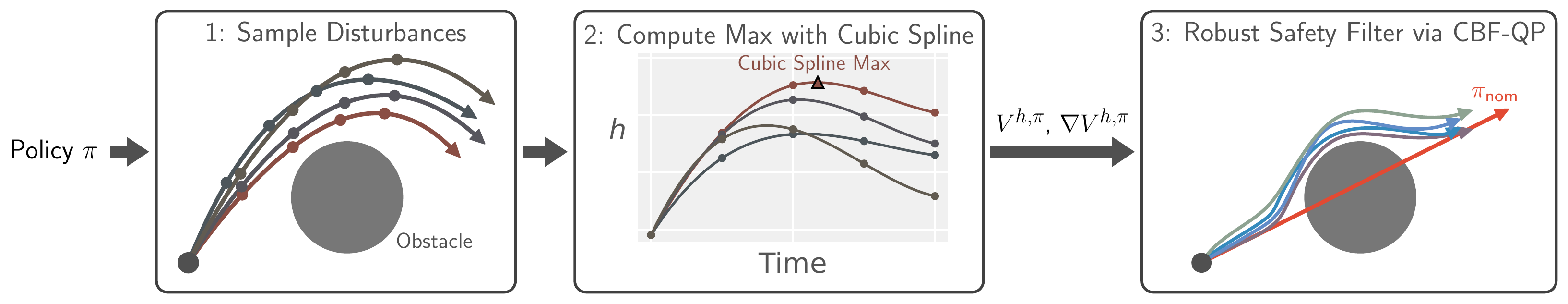}
    \small
    \caption{\small\textbf{Summary of RPCBF Algorithm. } Given a policy $\pi$, we sample disturbance trajectories, then compute the maximum $h$ with cubic splines to obtain $V^{h,\pi}$ and $\nabla V^{h,\pi}$ (using \hh{automatic differentiation}). This is used in \eqref{eq:cbf-qp} to obtain a robust \gls{sf}.}
    \label{fig:method}
    \vspace{-1.5em}
\end{figure*}
\subsection{Robust Extension of~\glspl{pcbf}}\label{sec:robust-ext}
We extend \gls{pcbf} to handle disturbances by defining the robust value function equivalent of \eqref{eq:value_fn} as
\begin{equation}\label{eq:rpcbf}
    \V^{h,\pi}_\infty(\vx_0) \coloneqq \sup_{t \geq 0} \sup_{\vd(\cdot)} h(\vx_t^\pi),
\end{equation}
it can be shown with similar proof \cite{so2024train,altarovici2013general} that $\V^{h,\pi}_\infty$ is a \textit{robust} CBF \cite{nguyen2021robust}, i.e., it satisfies \eqref{eq:cbf1} and, for $\cbf(\vx) \leq 0$,
\begin{equation}
    % \sup_{\vd \in \cD} \inf_{\vu \in \cU}\Lie_f \cbf(\vx) + \Lie_g \cbf(\vx)\vu\leq -\alpha(\cbf(\vx)).\label{eq:cbf2_rob}
    \sup_{\vd \in \cD} \inf_{\vu \in \cU} \nabla\cbf\T\big( f(\vx,\vd) + g(\vx,\vd)\vu \big) \leq -\alpha\big(\cbf(\vx)\big).\label{eq:cbf2_rob}
\end{equation}
\begin{algorithm}[htbp]
    \footnotesize
    \caption{\small Robust Policy CBF (RPCBF)}
    \label{alg:pncbf}
    \begin{algorithmic}[1]
        \State \text{\textbf{Input:}} Initial State $\vx_0$, Policy $\pi$, Constraint function $h$, Horizon $T=H\Delta t$ , Number of disturbance samples $N$
        \For {$i = 1:N$}
            \State Sample disturbance trajectory $\{ \vd^i_k \}_{k=1}^{H-1}$
            \State Rollout the policy $\pi$ on disturbed system \eqref{eq:system}
            \State Compute $\sup_{0\leq t<T} h(\vx_t^i)$ using cubic splines
        \EndFor
        \State Compute $\V^{h,\pi}_{T, N}(\vx_0)$ according to \eqref{eq:finiteHNV} 
        \State Compute the gradient $\nabla \V^{h,\pi}_{T, N}(\vx_0)$ using \hh{automatic differentiation}
    \end{algorithmic}
\end{algorithm}
Solving for robust controls that satisfy \eqref{eq:cbf2_rob} renders the zero sublevel set \textit{robust} forward-invariant \cite{jankovic2018robust}.
However, deriving the worst-case disturbance is generally intractable because it requires evaluating all possible disturbance trajectories.
Instead, we propose to only consider $N$ disturbance trajectories and take the worst-case out of the $N$ samples, resulting in the following~\gls*{rpcbf} approximation:
\begin{align}
    \V^{h,\pi}_T(\vx_0) &\approx \V^{h,\pi}_{T, N}(\vx_0) \coloneqq \max_{i=1,\cdots,N} \sup_{0 \leq t < T} h(\vx_t^i),\label{eq:finiteHNV}\\
    \dot{\vx}^i_t &= f(\vx^i_t, \vd^i_t) + g(\vx^i_t, \vd^i_{\rev{t}})\vu^i_t.\label{eq:xi}
\end{align}
We summarize our approach in \cref{alg:pncbf} and \cref{fig:method}. Note that \cref{alg:pncbf} must be executed once per control loop to obtain the value $V^{h,\pi}_{T,N}$ and gradient $\nabla V^{h,\pi}_{T,N}$ for the current state for use in \eqref{eq:cbf-qp}.
Different approaches can be implemented to perform informed sampling of disturbances.
For bounded disturbances, the worst-case scenario often occurs at the vertices of the disturbance set (e.g., for disturbance-affine dynamics).
Consequently, we choose to sample from a mixture of the uniform distribution $\mathcal{U}(\vd_\s{min}, \vd_\s{max})$ and the uniform distribution over the vertices.
Using better optimizers to approximate the worst-case samples will be left to future work.

While the finite-sample approximation does not guarantee robustness to any disturbance, it does ensure robustness to the specific sampled disturbances within the finite horizon. 
As the number of informed samples approaches infinity, the approximation increasingly captures the true worst-case scenarios.
\rev{Theoretical guarantees for this sampling-based approach could be established using random set theory~\cite{molchanov2005theory}, as demonstrated in~\cite{lew2021sampling}. }
\hh{Furthermore, statistical risk measures could be used instead of the worst-case out of $N$ samples. For instance, using \cite{akella2024sample} who bound the risk measure
evaluation of a random variable whose distribution is unknown.}
However, we leave this for future work.
For the simulation and hardware experiments below, we use \gls{pcbf} and \gls{rpcbf} to refer to their time-discretized finite-horizon and finite-sample approximations as described in this section.

% \vspace{-1em}
\section{Simulation Experiments}\label{sec:sim_ex}
We evaluate the performance of (R)\gls{pcbf} in simulation for high relative degree systems with box control constraints.

\noindent\textbf{Baselines.} We compare against the following~\glspl{sf}, which similarly do not incorporate~\glspl{nn} in their approach.
\begin{itemize}
    \item \textbf{Handcrafted Candidate~\gls{cbf} (\acrshort{hocbf})~\cite{nguyen2016exponential, xiao2019control}:} We construct a \emph{candidate}~\gls{cbf} via a Higher-Order \gls{cbf} on~$h$ without considering input constraints.
    \item \textbf{Approximate Nominal \gls{mpc}-based \gls{psf} (MPC)~\cite{wabersich2021predictive}:} A trajectory optimization problem is solved, imposing the safety constraints while penalizing deviations from the nominal policy. We consider the undisturbed system without assuming access to a known robust forward-invariant set, thus not imposing a terminal constraint.
\end{itemize}

\noindent\textbf{Systems.} 
We consider \hhh{four} systems: a~\gls{di}, a Segway, \hhh{an F-16 fighter jet (ground collision avoidance problem) \cite{heidlauf2018verification, stevens2015aircraft},} and AutoRally \cite{ShieldMPPI}, \hhh{a} 1/5 autonomous vehicle.
On the~\gls{di}, we consider position bounds ($|p|\leq 1$), while the Segway asks for an upright handlebar and considers position bounds ($|\theta|\leq 0.3\pi$, $|p|\leq2)$, $\Delta t=0.1$. \hhh{For the F-16, safety is defined as box constraints on states like altitude. Since this system is not control-affine in the throttle, we leave the throttle as the output of a P controller, resulting in a $16$-dimensional state space and a $3$-dimensional control space.}
In AutoRally, a crash occurs when the car stops after hitting the track boundary, while a collision involves contact without stopping.
\hh{For each system, we define $J$ continuously differentiable constraint functions~$h_j$  tailored to the problem at hand. For example, for the \gls{di} system we set $h_0= p - 1$ and $h_1 = - (p+1)$. From these we derive $J$ corresponding \glspl{cbf}, which yield to $J$ constraints in \eqref{eq:cbf-qp}.}
For the~\gls{di} and the Segway, we assume unknown but bounded \rev{time-varying disturbances on the} mass, \hhh{for the F-16 unknown but bounded matched disturbances ($d=1$),} and for the AutoRally additive truncated Gaussian noise. 
\react{Keep in mind that our approach does not require designing/learning a new \gls{cbf} for different systems, disturbance assumptions, or input constraints, but simply requires swapping the dynamics, specifying the disturbance and constraints.}
During testing, we consider a constant zero-control nominal policy for the~\gls{di}, maximum acceleration for the Segway, \hhh{a PID controller for the F-16}, and~\gls{mppi} control~\cite{CCMPPI} for AutoRally.
\begin{figure}
    \centering
    \includegraphics[width=0.49\textwidth]{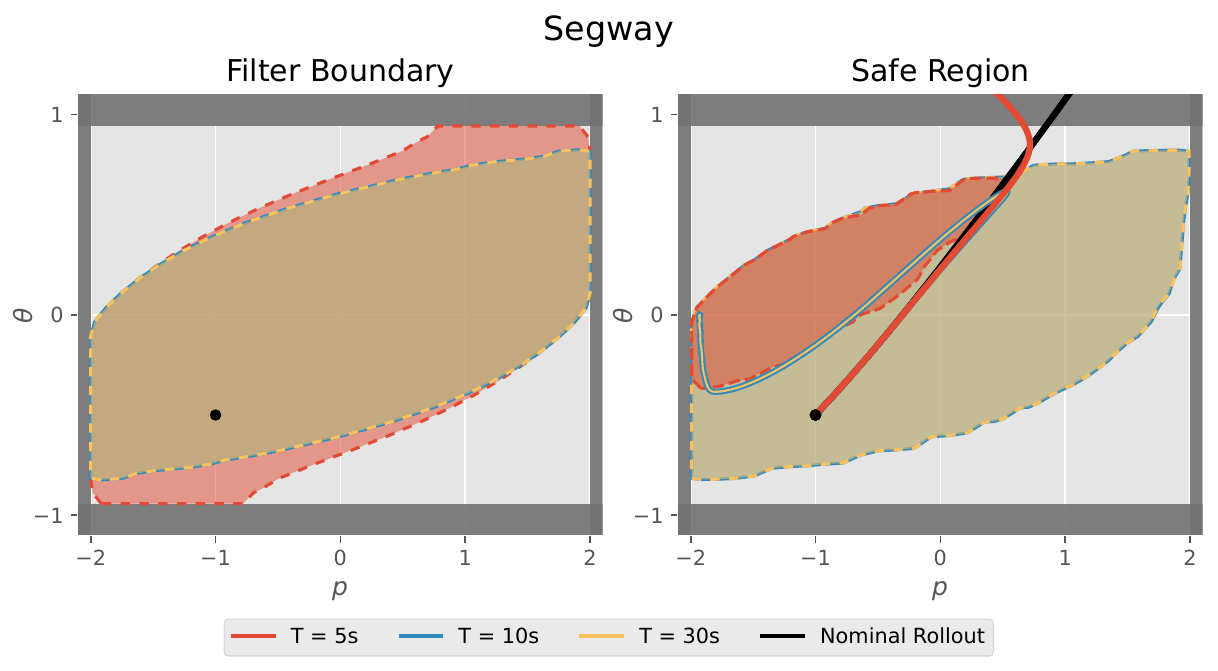}
    \caption{\small \rev{\textbf{Filter Boundary and Safe Region for \gls{pcbf}-\glspl{sf} with varying Horizon $\boldsymbol{T}$ on the Undisturbed Segway}}. We plot where the nominal policy affects the \gls{sf}'s output (Filter Boundary) and the states where the \gls{sf} ensures safety over $\bar{T} =30s$ (Safe Region). Trajectories from an initial state within the filter boundary (black dot, $\bullet$) are color-coded by horizon length of the \gls{pcbf}-\gls{sf}. A too-short horizon overapproximates the filter boundary, causing unsafe trajectories. } 
    \label{fig:segway_horizons}
    \vskip -5mm
\end{figure}
\begin{figure}[!tp]
    \centering
    % \vspace{1em}
    \begin{subfigure}[b]{0.49\textwidth}
        \centering
        \includegraphics[width=\textwidth]{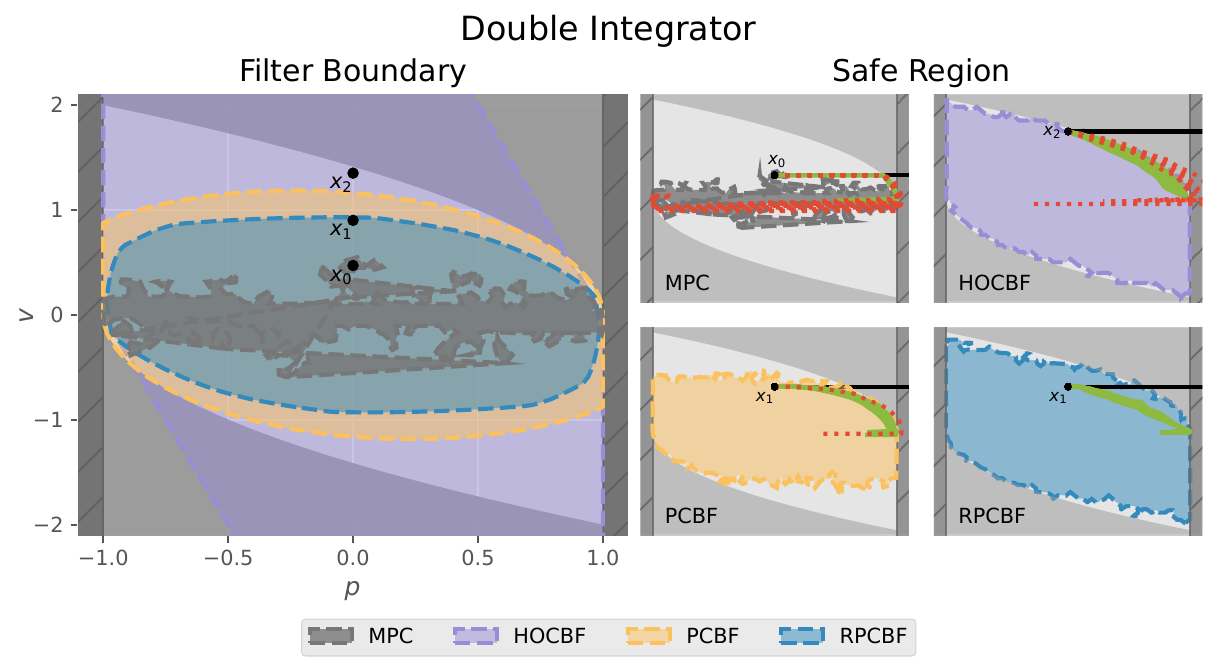}
        \caption{\small $T= 6.4s$, $N=100$ and $\bar{T}= 15s$.}
        \label{fig:subfig1}
    \end{subfigure}
    \hfill
    % Second subfigure
    \begin{subfigure}[b]{0.49\textwidth}
        \centering
        \includegraphics[width=\textwidth]{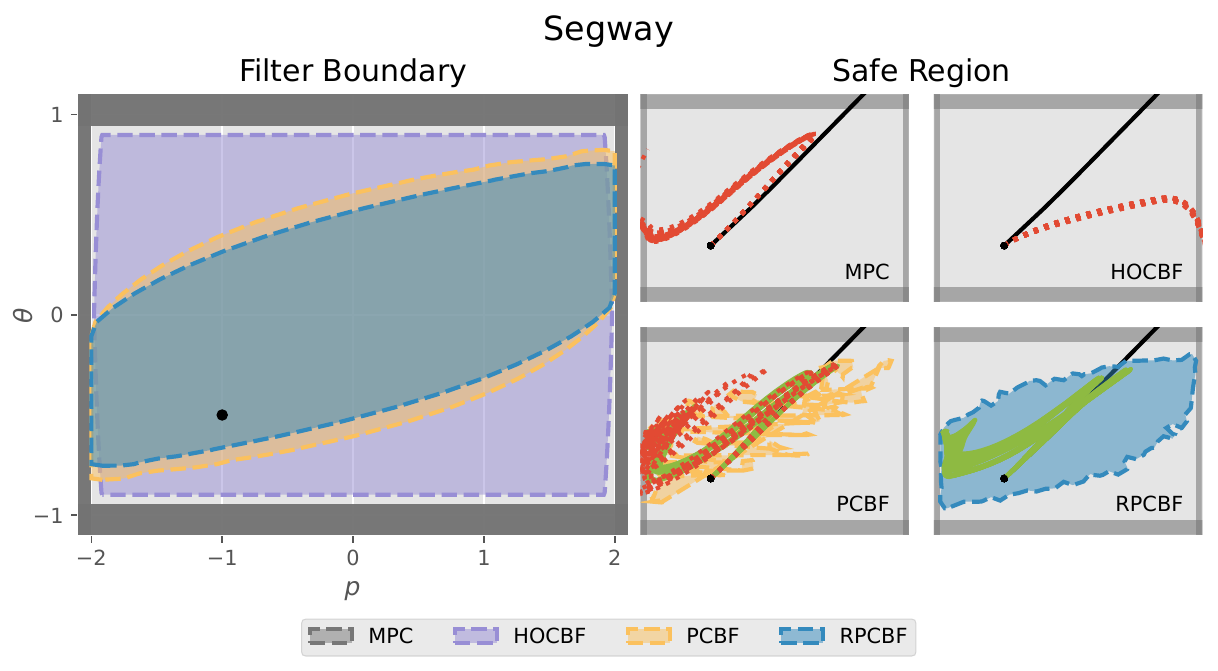}
        \caption{\small $T= 20s$, $N=100$ and $\bar{T}= 30s$}
        \label{fig:subfig2}
        % \vskip -6mm
    \end{subfigure}
    \caption{\small \textbf{Comparison of Filter Boundary and Safe Region on DI (a) and Segway (b).} The true unsafe region for the undisturbed \gls{di} is shaded in \textcolor{gray}{gray}. (R)\glspl{pcbf} use horizon $T$ and $N$ samples to derive the value function. The safe region is determined for $\bar{T}$. Trajectories from selected initial states are shown for $\bar{N}=25$ sampled $\vd$ trajectories. \textcolor{ggRed}{Red dotted} and \textcolor{ggGreen}{green solid} lines indicate unsafe and safe trajectories, respectively, with the nominal trajectory in black.}
    \label{fig:dbint_comparison}
    \vskip -7mm
\end{figure}
\subsection{Influence of Horizon Length on Undisturbed Segway} \label{subsec:segway_horizon_length}
While the infinite-horizon policy value function is a \gls{cbf}, we use a finite-horizon approximation, making the \gls{sf} performance horizon-dependent. To illustrate this, we assess the impact of different horizon lengths on the \gls{pcbf}-\gls{sf}, see \cref{fig:segway_horizons}.
We plot the state space from where $\pi_\s{nom}$ can influence the output of the~\gls{sf} (\emph{Filter Boundary}) and from where the~\gls{sf} preserves safety (\emph{Safe Region}). For \gls{cbf}-based filters, the filter boundary is defined by the \gls{cbf}'s zero level set. The safe region is determined for a $\pi_\s{nom}$ by solving~\eqref{eq:cbf-qp} and rolling out the system over a horizon $\bar{T}=30s$. 
For a short~\gls{pcbf} horizon, i.e., $T=5s$, the true \gls{ci} set is overapproximated. Consequently, the~\gls{sf} fails to preserve safety, as illustrated by the resulting unsafe example trajectory. In contrast, a longer horizon of \rev{$T=10s$} provides a much closer approximation of the true \gls{ci} set. This is evident when comparing it to an even longer horizon, such as $T=30s$, which does not result in a visibly \rev{smaller} safe region, indicating that \rev{$T=10s$} is already sufficient.
% previous position of figure 5
%%
\subsection{Behavior on Disturbed Double Integrator and Segway}
We explore the robustness of different~\glspl{sf}, examining the filter boundary and safe region, derived for one sampled disturbance trajectory per state, as shown in \cref{fig:dbint_comparison}.
We visualize rolled-out trajectories for~$\bar{N}$ sampled disturbance trajectories (uniformly sampled and on the vertices) from selected initial states within the filter boundary. 
On the~\gls{di}, only the \gls{rpcbf}-\gls{sf} achieves safety for all $\bar{N}$ sampled disturbance \rev{trajectories}. Since the \gls{rpcbf} accounts for the worst-case among the $N$ sampled disturbances, the filter boundary is more conservative.
On the Segway,~\gls{mpc} violates the safety constraints in all cases and hence has an empty filter boundary and safe region.
Only the \gls{rpcbf}-\gls{sf} achieves safe trajectories for all considered samples.
Next, we evaluate the 
(R)\gls{pcbf}-\glspl{sf} at uniformly distributed initial states,
% for $\bar{N}$ sampled disturbance trajectories, 
see \cref{fig:dbint_safety}. The \gls{rpcbf}-\gls{sf} achieves safety for all evaluated states within its zero-level set and the sampled disturbances, \rev{while the \gls{pcbf} overapproximates the safe set.}
\begin{figure}
        % \vspace{1em}
    \centering
    \includegraphics[width=0.49\textwidth]{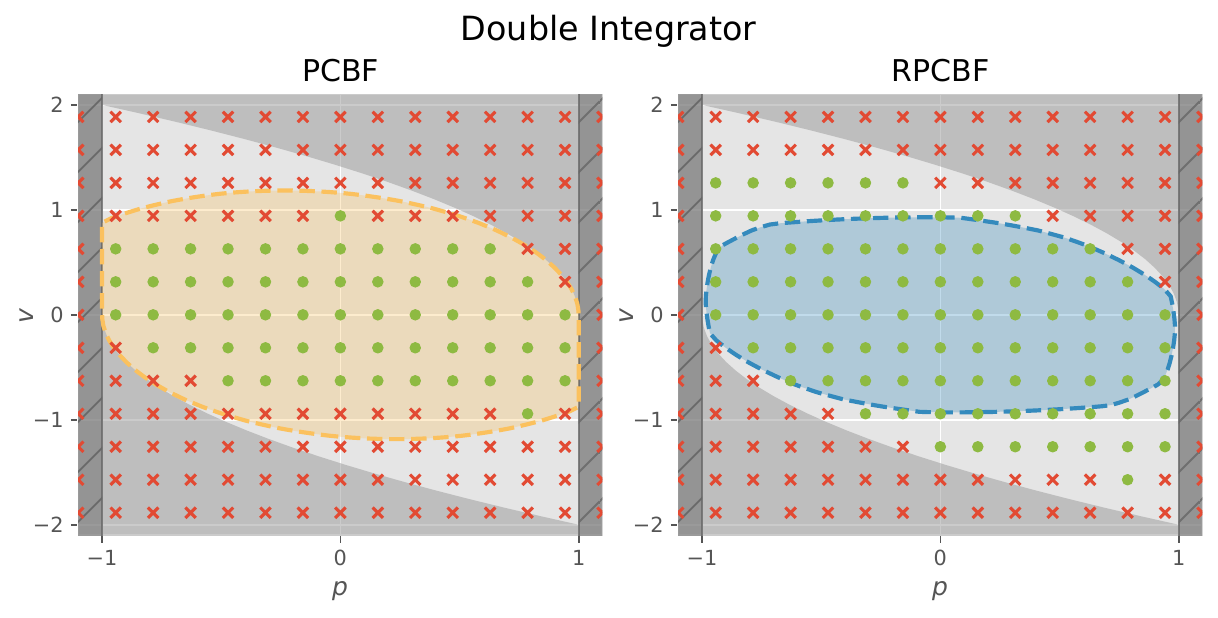}
    \caption{\small \textbf{Robust Safety Evaluation.} We plot the zero level set of the (R)\glspl{pcbf}. 
    \textcolor{ggGreen}{Green dots} indicate safe rollouts for all $\bar{N}=25$ sampled disturbance trajectories, while \textcolor{ggRed}{red crosses} indicate a failure in at least one trajectory. The \gls{rpcbf}-\gls{sf} achieved safety for all states within its zero level set for the sampled disturbance trajectories.}
    \label{fig:dbint_safety}
    \vskip -4mm
\end{figure}

\subsection{Simulations on AutoRally}
Finally, to assess the safety improvements brought about by the proposed (R)\gls{pcbf}, we integrate the \glsshort{hocbf} and the proposed methods with Shield-MPPI~(SMPPI)~\cite{ShieldMPPI, SCBFMPPI},
and test them on AutoRally. \Cref{fig:Autorally_traj} shows that SMPPI using~\gls{rpcbf} generates the safest trajectories.
The statistics of the safety performance of the controllers are shown in~\cref{Table: PerformanceComparison}.
\begin{figure}
    \centering
    \includegraphics[width=0.30\textwidth]{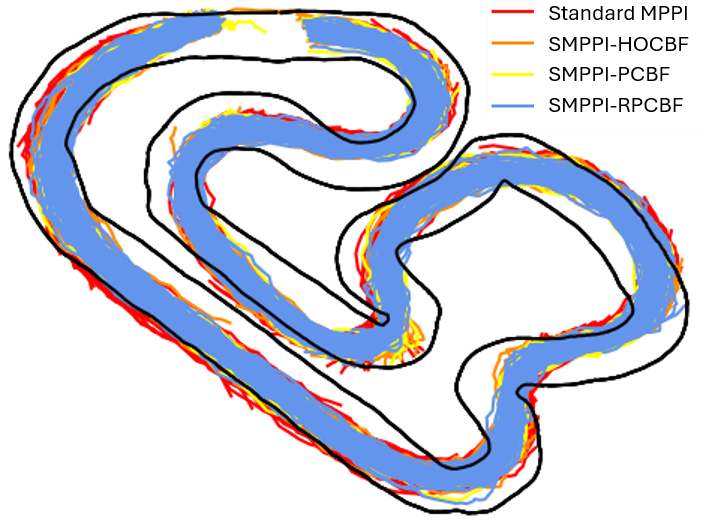}
    \caption{\small \textbf{Trajectory Comparisons on AutoRally.}
    SMPPI-RPCBF (in \textcolor{ARBlue}{blue}) leads to the tightest spread of states inside the track.
    }
    \label{fig:Autorally_traj}
    \vskip -2mm
\end{figure}
\begin{table}[!h]
\caption{\small \textbf{Collision \& Crash Rate on AutoRally.} \gls{mppi} causes the most collisions and crashes. SMPPI-HOCBF and SMPPI-\gls{pcbf} improve safety, while SMPPI-\gls{rpcbf} minimizes collisions \& crashes.}
\footnotesize
\centering
\begin{tabular}{cccc} \toprule
% \hline
\textbf{Controller} & \textbf{Mean Collisions per Lap} & \textbf{Crash Rate} \\
\midrule
$\textrm{MPPI}$      & 4.53 & 0.80 \\
\midrule
$\textrm{SMPPI-HOCBF}$      & 1.38 & 0.15 \\
\midrule
$\textrm{SMPPI-PCBF}$      & 1.23 & 0.12 \\
\midrule
$\textrm{SMPPI-RPCBF}$      & \textbf{1.13} & \textbf{0.09} \\
\bottomrule
\end{tabular}
\label{Table: PerformanceComparison}
\vspace{-1em}
\end{table}
% \subsection{\rev{Reactivness}}
\subsection{\rev{Comparison of Computation Times}}\label{sec:compute}
\Cref{fig:comp_time} shows the real-time feasibility of~\gls{rpcbf}-\gls{sf} on the \gls{di} \hhh{and F-16}, highlighting how (component) computation times scale with increasing $T$ and $N$, evaluated on a laptop CPU.
\begin{figure}[t]
    \centering
    \includegraphics[width=0.5\textwidth]{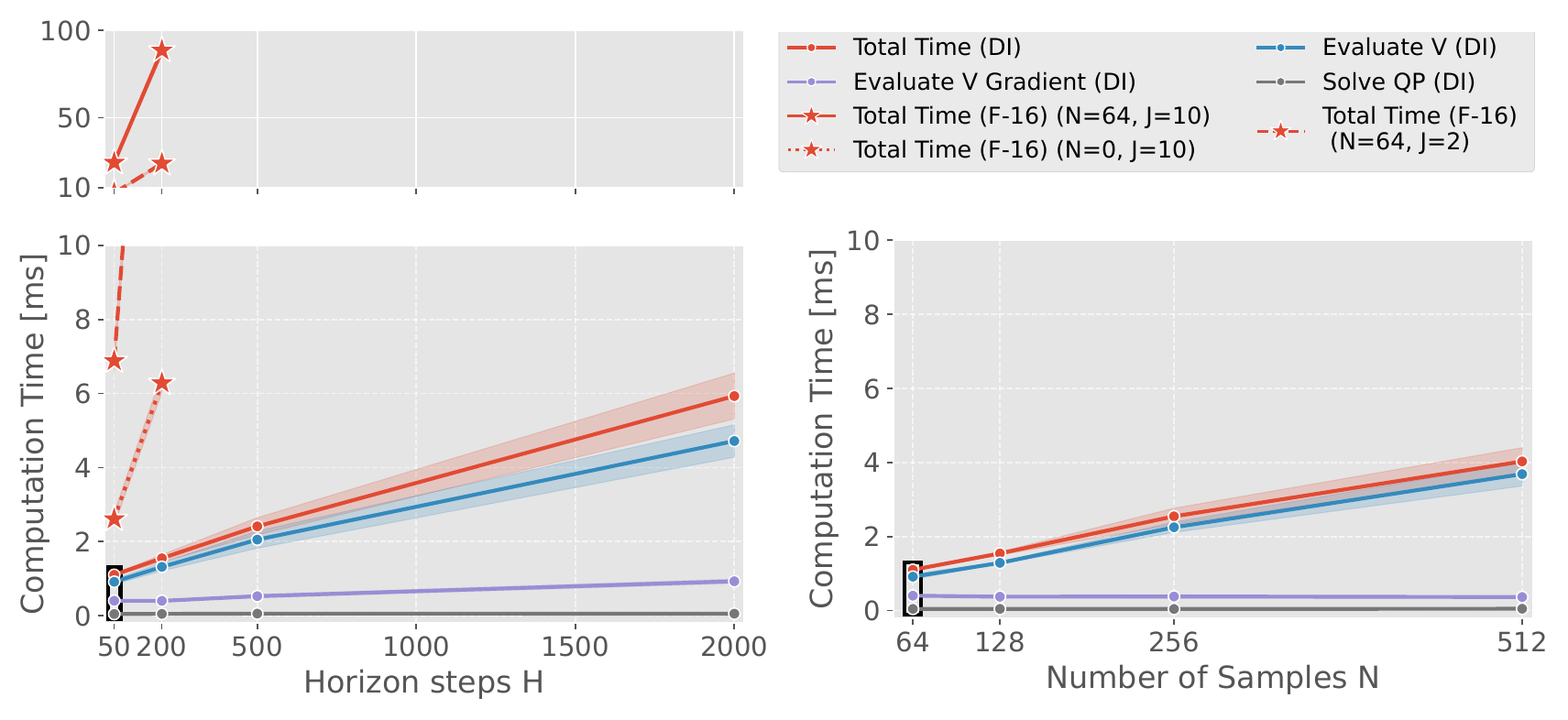}
    \caption{\small \hhh{\textbf{Per-Timestep Computation Times for \glslong{di} and F-16.}} Computation times for the total and individual components for varying \hh{$H$} ($N=64$) and $N$ (\hh{$H=50$}), including mean and standard deviation across initial conditions and timesteps. \hh{The black box highlights the settings considered in the hardware experiments.}}
    \label{fig:comp_time}
    \vskip -3mm
\end{figure}
\section{Hardware Experiments}
We conduct hardware experiments on the Crazyflie platform to test the robustness of the proposed~\gls{rpcbf}-\gls{sf} to real-world disturbances (see \cref{fig:teaser}).
\begin{figure}
    \centering
    \includegraphics[width=0.75\linewidth]{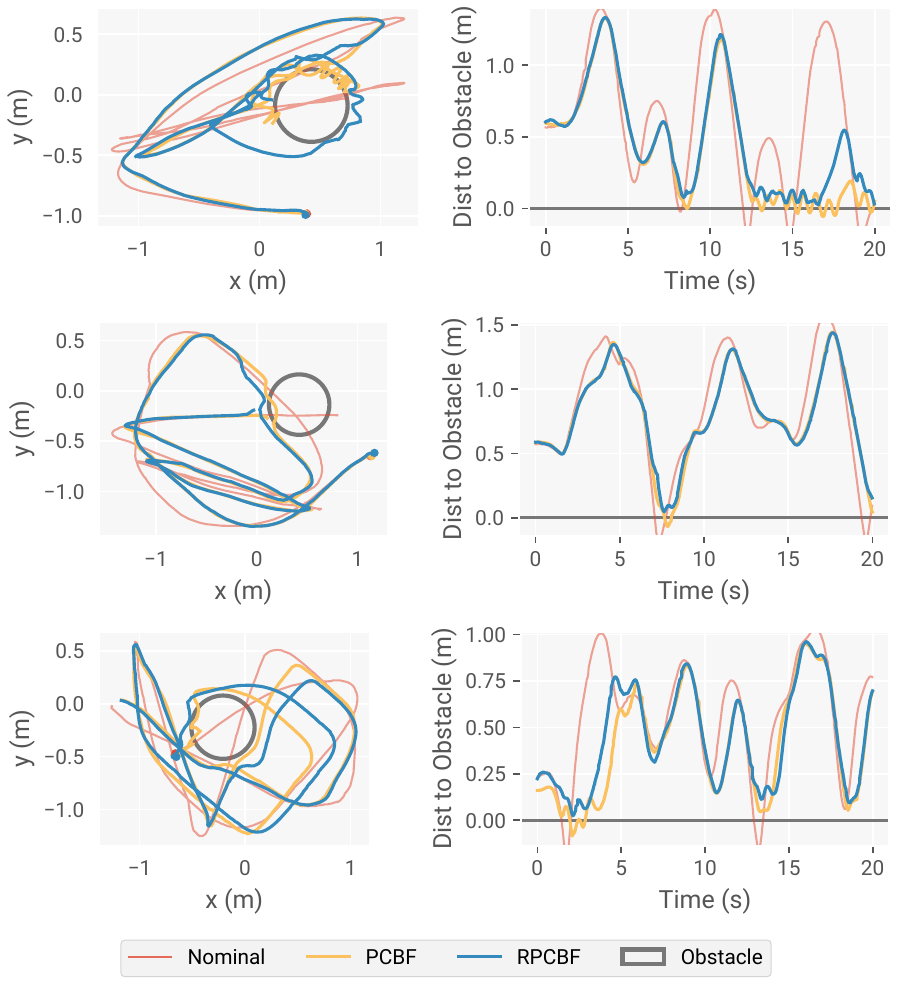}
    \caption{\small \textbf{Hardware Traces. } \gls{rpcbf}-\gls{sf} maintains safety despite model errors, while \gls{pcbf}-\gls{sf}, assuming perfect dynamics, collides.}
    \label{fig:cf_traj}
    \vskip -5mm
\end{figure}
Using the onboard position PID controller, we model the system as a~\gls{di}, assuming that position setpoints are converted to accelerations onboard and treat the model error as an acceleration disturbance.
We randomly generate nominal trajectories and treat the corresponding positions as the nominal control. A circular obstacle is placed at the densest part of the trajectory to encourage collisions. We use $T=5s$ (50 steps at $\Delta t = \qty{0.1}{\second}$) and $N=64$. The \gls{rpcbf}-\gls{sf} runs at a frequency of \qty{100}{\Hz} on a laptop. 

We run the (R)\gls{pcbf}-\glspl{sf} with $\alpha=5$ for $6$ nominal trajectories; \cref{fig:cf_traj} shows results for $3$ of them. The \gls{rpcbf}-\gls{sf} remains safe throughout, while the \gls{pcbf}-\gls{sf} always collides.

\begin{figure}
    \centering
    \vspace{1em}
    \includegraphics[width=0.55\linewidth]{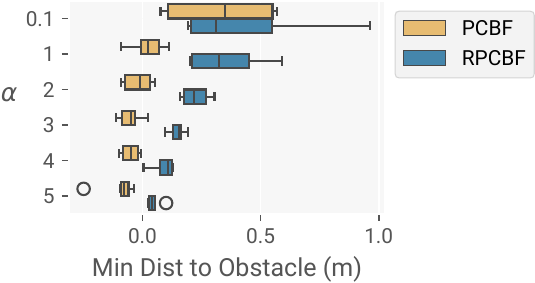}
    \caption{\small \textbf{Safety for different $\boldsymbol{\alpha}$. } Minimum distance to the obstacle over $6$ random nominal trajectories.
    While \gls{pcbf} is safe for small $\alpha$, it requires fine-tuning, whereas \gls{rpcbf} is safe for all tested $\alpha$.}
    \label{fig:cf_stats}
    \vskip -3mm
\end{figure}
We next vary the choice of the class-$\kappa$ function $\alpha$ and plot the results in \cref{fig:cf_stats}. While the non-robust~\gls{pcbf}-\gls{sf} does not collide with the obstacle when $\alpha$ is sufficiently small, this requires fine-tuning and is difficult to know beforehand. On the other hand, the \gls{rpcbf}-\gls{sf} is safe for all values of $\alpha$ we tested, allowing $\alpha$ to be used as a parameter that controls the behavior without also simultaneously affecting safety.

\section{Discussion and Conclusion}\label{sec:conclusion}
\rev{In this work, we proposed the~\acrfull{rpcbf}, a method for constructing robust \glspl{cbf} using the robust policy value function derived from rolling out a design policy. Subsequently, we introduced a real-time approximation that can be derived online, with conditions for its validity as a \gls{cbf}.} 
Simulation experiments demonstrate that a safety filter constructed using the~\gls{rpcbf} yields improved safety and more accurate estimation of the robust control-invariant set compared to existing methods. Hardware experiments on a quadcopter highlight the importance of accounting for model errors to ensure safety.

Future work will focus on analyzing the safety guarantees of the~\glspl{rpcbf} approximation, considering the finite-horizon, time-discretization, and sampling-based approach. 
\rev{Additionally, while the \gls{rpcbf} acts as a \gls{cbf} for any design policy if a long enough horizon is considered, conservativeness depends on the design policy. Deriving a policy to reduce conservativeness while maintaining infinite horizon guarantees is an important research direction.}
\hh{Moreover, extending our method to time-varying constraints and integrating real-time onboard perception are important directions for future work.}

\section*{Acknowledgement}
% This material is based upon work supported by the Under Secretary of Defense for Research and Engineering under Air Force Contract No. FA8702-15-D-0001. 
Any opinions, findings, conclusions or recommendations expressed in this material are those of the author(s) and do not necessarily reflect the views of the Under Secretary of Defense for Research and Engineering. 
© 2024 Massachusetts Institute of Technology.
Delivered to the U.S. Government with Unlimited Rights, as defined in DFARS Part 252.227-7013 or 7014 (Feb 2014). Notwithstanding any copyright notice, U.S. Government rights in this work are defined by DFARS 252.227-7013 or DFARS 252.227-7014 as detailed above. Use of this work other than as specifically authorized by the U.S. Government may violate any copyrights that exist in this work.  DISTRIBUTION STATEMENT A. Approved for public release. Distribution is unlimited.

% \addtolength{\textheight}{-12cm}   % This command serves to balance the column lengths
%                                   % on the last page of the document manually. It shortens
%                                   % the textheight of the last page by a suitable amount.
%                                   % This command does not take effect until the next page
%                                   % so it should come on the page before the last. Make
%                                   % sure that you do not shorten the textheight too much.

%%%%%%%%%%%%%%%%%%%%%%%%%%%%%%%%%%%%%%%%%%%%%%%%%%%%%%%%%%%%%%%%%%%%%%%%%%%%%%%%

%%%%%%%%%%%%%%%%%%%%%%%%%%%%%%%%%%%%%%%%%%%%%%%%%%%%%%%%%%%%%%%%%%%%%%%%%%%%%%%%

%%%%%%%%%%%%%%%%%%%%%%%%%%%%%%%%%%%%%%%%%%%%%%%%%%%%%%%%%%%%%%%%%%%%%%%%%%%%%%%%

% \input{sections/100_appendix}

% \section*{ACKNOWLEDGMENT}

\bibliographystyle{IEEEtran}
\bibliography{references}  

\end{document}